
\documentclass[12pt]{amsart}%
\usepackage{amscd,amsmath,latexsym,amsthm,amsfonts,amssymb,graphicx,color,geometry,hyperref}
\usepackage[utf8]{inputenc}
\usepackage{amsmath}
\usepackage{amsfonts}
\usepackage{amssymb}
\usepackage{graphicx}%
\setcounter{MaxMatrixCols}{30}
\providecommand{\U}[1]{\protect\rule{.1in}{.1in}}
\providecommand{\U}[1]{\protect\rule{.1in}{.1in}}

\newtheorem{theorem}{Theorem}[section]

\newtheorem{example}[theorem]{Example}

\newtheorem{lemma}[theorem]{Lemma}

\newtheorem{definition}[theorem]{Definition}
\numberwithin{equation}{section}

\geometry{left=2cm,right=2cm,top=2cm,bottom=2cm,headheight=3mm,paper=a4paper}
\begin{document}
\title[Lineability and spaceability: a new approach]{Lineability and spaceability: a new approach}
\author[F\'{a}varo]{V.V. F\'{a}varo}
\address{Faculdade de Matem\'{a}tica \\
\indent Universidade Federal de Uberl\^andia \\
\indent
	38.400-902 - Uberl\^andia}
\email{vvfavaro@gmail.com}
\author[Pellegrino]{D. Pellegrino}
\address{Departamento de Matem\'{a}tica \\
\indent
	Universidade Federal da Para\'{\i}ba \\
\indent
	58.051-900 - Jo\~{a}o Pessoa, Brazil}
\email{pellegrino@pq.cnpq.br}
\author[Tomaz]{D. Tomaz}
\address{Departamento de Matem\'{a}tica \\
\indent
	Universidade Federal da Para\'{\i}ba \\
\indent
	58.051-900 - Jo\~{a}o Pessoa, Brazil}
\email{danieltomazmatufpb@gmail.com}
\thanks{Daniel Tomaz is supported by Capes}
\thanks{Daniel Pellegrino is supported by CNPq}
\thanks{Vinicius F\'{a}varo is supported by FAPEMIG Grant APQ-03181-16; and CNPq Grant 310500/2017-6}
\thanks{2010 Mathematics Subject Classification: 15A03, 46A16; 46A45}
\keywords{Cardinal numbers, lineability, spaceability}

\begin{abstract}
The area of research called \textquotedblleft Lineability\textquotedblright%
\ looks for linear structures inside exotic subsets of vector spaces. In the
last decade lineability/spaceability has been investigated in rather general
settings; for instance, Set Theory, Probability Theory, Functional Analysis,
Measure Theory, etc. It is a common feeling that positive results on
lineability/spaceability are quite natural (i.e., in general \textquotedblleft
large\textquotedblright subspaces can be found inside exotic subsets of vector
spaces, in quite different settings) and more restrictive approaches have been
persecuted. In this paper we introduce and explore a new approach in this direction.

\end{abstract}
\maketitle



\section{Introduction and preliminaries}

The notions of lineability and spaceability were introduced by V. Gurariy and
L. Quarta in \cite{Quarta} and by Aron, Gurariy, and Seoane-Sep\'{u}lveda in
\cite{Aron}. For a comprehensive background on lineability and spaceability we
recommend the recent monograph \cite{book}. This line of research investigates
the presence of \textquotedblleft large\textquotedblright\ linear subspaces in
certain mathematical objects with \textit{a priori} no linear structure. The
properties of lineability and spaceability are studied in several contexts
with interesting applications in different fields as norm-attaining operators,
multilinear forms, homogeneous polynomials, sequence spaces, holomorphic
mappings, absolutely summing operators, Peano curves, fractals, among others.
See, for instance, \cite{Nacib, BBFP_PAMS, Bernal, Pellegrino2, Pellegrino1,
BF_MMJ, CFS_PAMS, cariellojfa, Teixeira}, and the references therein.\newline
Let $E$ be a vector space and $\alpha$ be a cardinal number. A subset $A$ of
$E$ is called $\alpha\text{-lineable}$ if $A\cup\left\{  0\right\}  $ contains
an $\alpha$-dimensional linear subspace of $E;$ if $E$ is a topological vector
space, it is called $\alpha$-spaceable if $A\cup\left\{  0\right\}  $ contains
a closed $\alpha$-dimensional linear subspace of $E$. It is well known that,
in general, positive results of lineability are rather usual and the feeling
that \textquotedblleft everything is lineable\textquotedblright\ is somewhat
common. So, a more restrictive notion of lineability is in order. Our paper
investigates a stronger notion of lineability/spaceability, which is rather
more restrictive.\newline Let us establish some notations that will be carried
out along this work. From now on all vector spaces are considered over a fixed
scalar field $\mathbb{K}$ which can be either $\mathbb{R}$ or $\mathbb{C}$ .
We shall denote by $\mathfrak{c}=card\left(  \mathbb{R}\right)  $ and
$\aleph_{0}=card\left(  \mathbb{N}\right)  $. The following concepts are more
restrictive notions of lineability/spaceability inspired by some ideas from
\cite{tonydaniel}:

\begin{definition}
Let $\alpha,\beta,\lambda$ be cardinal numbers and $V$ be a vector space, with
$\dim V=\lambda$ and $\alpha<\beta\leq\lambda$. A set $A\subset V$ is:

(i) $\left(  \alpha,\beta\right)  $-lineable if it is $\alpha$-lineable and
for every subspace $W_{\alpha}\subset V$ with $W_{\alpha}\subset A\cup\left\{
0\right\}  $ and $\dim W_{\alpha}=\alpha$, there is a subspace $W_{\beta
}\subset V$ with $\dim W_{\beta}=\beta$ and $W_{\alpha}\subset W_{\beta
}\subset A\cup\left\{  0\right\}  $.

(ii) $\left(  \alpha,\beta\right)  $-spaceable if it is $\alpha$-lineable and
for every subspace $W_{\alpha}\subset V$ with $W_{\alpha}\subset A\cup\left\{
0\right\}  $ and $\dim W_{\alpha}=\alpha$, there is a closed subspace
$W_{\beta}\subset V$ with $\dim W_{\beta}=\beta$ and $W_{\alpha}\subset
W_{\beta}\subset A\cup\left\{  0\right\}  $. \newline
\end{definition}

The original notion of lineability (or spaceability) is just the case
$\alpha=0$. Let $\alpha_{1},\alpha_{2}$ be cardinal numbers with $\alpha
_{1}<\alpha_{2}\leq\beta$. We start by showing that $\beta$-lineability does
not imply $\left(  \alpha,\beta\right)  $-lineability and \textit{vice versa}.
To provide examples, let us recall that for $p>0$, $\ell_{p}$ denotes the
Banach space ($p$-Banach space if $p<1$) of the sequences $\left(
x_{j}\right)  _{j=1}^{\infty}$ such that
\[
\left\Vert \left(  x_{j}\right)  _{j=1}^{\infty}\right\Vert _{p}=\left(
\sum\limits_{j=1}^{\infty}\left\vert x_{j}\right\vert ^{p}\right)
^{1/p}<\infty.
\]
We denote by $e_{j}$ the canonical vector $\left(  0,...,0,1,0,0,...\right)  $
with $1$ in the $j$-th coordinate, for each $j\in\mathbb{N}$.

More generally, let $p\in[1,\infty)$ and $\Gamma$ be an abstract nonempty set. We denote by $\ell_p(\Gamma)$ the vector space of all functions $f\colon\Gamma\longrightarrow\mathbb{K}$ such that $\sum_{\gamma\in\Gamma}\vert f(\gamma)\vert^p<\infty,$ which becomes a Banach space with the norm 
$$\Vert f\Vert_p= \left(\sum_{\gamma\in \Gamma}\vert f(\gamma)\vert^p\right)^{1/p},
$$
where the sum is defined by
$$\sum_{\gamma\in\Gamma}\vert f(\gamma)\vert^p=\sup\left\{ \sum_{\gamma\in F}\vert f(\gamma)\vert^p: F \textrm{ is a finite subset of } \Gamma\right\}.
$$
We also denote the elements of the canonical generalized Schauder basis of $\ell_p(\Gamma)$ by $e_i$, $i\in\Gamma.$

It is clear that, when $\Gamma=\mathbb{N}$, we have $\ell_p(\mathbb{N})=\ell_p.$ For details about the space $\ell_p(\Gamma)$  and related results we refer to \cite{zizler}.

\begin{example}
Let $n\in\mathbb{N}$ and consider the following subset of $\ell_{p}$:
\[
A=span\left\{  e_{1},e_{2},\ldots,e_{n}\right\}  \cup\left\{  \left(
x_{j}\right)  _{j=1}^{\infty}\in\ell_{p}:x_{1}=x_{2}=\ldots=x_{n}=0\right\}  .
\]
Thus $A$ is $\left(  n+1,\mathfrak{c}\right)  $-lineable, but it is not
$\left(  n,\mathfrak{c}\right)  $-lineable.
\end{example}

\begin{example}
Let $\mathcal{B}=\{e_{i}:i\in\Gamma\}$ be the canonical generalized Schauder
basis in $\ell_{p}\left(  \Gamma\right)  $ with $card\left(  \Gamma\right)
=2^{\aleph_{1}}$. Choose $i_{0},i_{1}\in\Gamma.$ We can easily write%
\[
\mathcal{B}-\{e_{i_{0}},e_{i_{1}}\}=\bigcup\limits_{\left(  \lambda
,\mu\right)  \in\mathbb{R}^{2}}\mathcal{A}_{\left(  \lambda,\mu\right)  }%
\]
as a pairwise disjoint union, with%
\[
card\left(  \mathcal{A}_{\left(  \lambda,\mu\right)  }\right)  =2^{\aleph_{1}}%
\]
for all $\left(  \lambda,\mu\right)  \in\mathbb{R}^{2}$. Let%
\[
A=\bigcup\limits_{\left(  \lambda,\mu\right)  \in\mathbb{R}^{2}}span\left(
\left\{  \lambda e_{i_{0}}+\mu e_{i_{1}}\right\}  \cup\mathcal{A}_{\left(
\lambda,\mu\right)  }\right)  .
\]
Note that there is no vector space $W\subset\ell_{p}\left(  \Gamma\right)  $
with $\dim W=2^{\aleph_{1}}$ and%
\[
span\{e_{i_{0}},e_{i_{1}}\}\subset W\subset A\cup\{0\}.
\]

In fact, if such $W$ exists, since%

\[
e_{i_{0}}\in W\subset A\cup\{0\},
\]
by the very definition of $A$ we conclude that
\begin{equation}
W\subset\bigcup\limits_{\overset{\lambda\in\mathbb{R}}{\lambda\neq0}%
}span\left( \left\{  e_{i_{0}}\right\} \cup\mathcal{A}_{\left(  \lambda
,0\right)  }\right)  \bigcup span\{e_{i_{0}},e_{i_{1}}\}. \label{qwe}%
\end{equation}

Let us prove it: If the inclusion above was false, then there would exist
$w\in W$ with
\[
w\notin\bigcup\limits_{\overset{\lambda\in\mathbb{R}}{\lambda\neq0}%
}span\left(  \left\{  e_{i_{0}}\right\}  \cup\mathcal{A}_{\left(
\lambda,0\right)  }\right)  \bigcup span\{e_{i_{0}},e_{i_{1}}\},
\]
i.e.,
\[
w\in\bigcup\limits_{\underset{\mu\neq0}{\left(  \lambda,\mu\right)
\in\mathbb{R}^{2}}}span\left(  \left\{  \lambda e_{i_{0}}+\mu e_{i_{1}%
}\right\}  \cup\mathcal{A}_{\left(  \lambda,\mu\right)  }\right)  \bigcup
span\left(  \mathcal{A}_{\left(  0,0\right)  }\right)  \text{ and }w\notin
span\{e_{i_{0}},e_{i_{1}}\}.
\]
Thus
\[
w=r\left(  \lambda e_{i_{0}}+\mu e_{i_{1}}\right)  +v
\]
with $v\in span\left(  \mathcal{A}_{\left(  \lambda,\mu\right)  }\right)
,v\neq0,$ and $r\in\mathbb{R}$, where $\mu\neq0$ or $\left(  \lambda
,\mu\right)  =(0,0).$ First suppose that $\mu\neq0$. Since $W$ is a vector
space containing $e_{i_{0}}$ and $e_{i_{1}},$ we have%
\[
w+\left(  -r\lambda+\lambda\right)  e_{i_{0}}+\left(  -r\mu+\mu+s\right)
e_{i_{1}}\in W,
\]
i.e.,%
\[
\lambda e_{i_{0}}+\left(  \mu+s\right)  e_{i_{1}}+v\in W
\]
for any choice of $s\in\mathbb{R}.$ Let us choose $s\neq-\mu$ such that%
\[
\left(  \lambda,\mu\right)  \text{ and }\left(  \lambda,\mu+s\right)  \text{
are linearly independent .}%
\]
Then,
\[
\lambda e_{i_{0}}+\left(  \mu+s\right)  e_{i_{1}}\neq0,
\]
and since $v\in span\left(  \mathcal{A}_{\left(  \lambda,\mu\right)  }\right)
$ we have that
\[
v\notin\bigcup\limits_{\left(  \alpha,\beta\right)  \neq\left(  \lambda
,\mu\right)  }span\left(  \mathcal{A}_{\left(  \alpha,\beta\right)  }\right)
.
\]
Now it follows from the definition of $A$ that%
\[
\lambda e_{i_{0}}+\left(  \mu+s\right)  e_{i_{1}}+v\notin A,
\]
which is a contradiction.

Now, let us consider the case $\left(  \lambda,\mu\right)  =(0,0).$ In this
case
\[
w\in span\left(  \mathcal{A}_{\left(  0,0\right)  }\right)  \text{ and so }
w\notin\bigcup\limits_{\left( \alpha,\beta\right) \neq\left(  0,0\right)
}span\left(  \mathcal{A}_{\left( \alpha,\beta\right)  }\right)  .
\]
Since $e_{i_{0}},e_{i_{1}}\in W$, for any choice of $\left( \alpha
,\beta\right) \neq(0,0)$ we obtain
\[
\alpha e_{i_{0}}+\beta e_{i_{1}}+w\in W.
\]
By the definition of $A$ we have%
\[
\alpha e_{i_{0}}+\beta e_{i_{1}}+w\notin A,
\]
a contradiction.

Analogously, we conclude that
\begin{equation}
W\subset\bigcup\limits_{\mu\in\mathbb{R}\setminus\{0\}}span\left(  \left\{
e_{i_{1}}\right\}  \cup\mathcal{A}_{\left(  0,\mu\right)  }\right)  \bigcup
span\{e_{i_{0}},e_{i_{1}}\}.\label{1212}%
\end{equation}
Thus, from (\ref{qwe}) and (\ref{1212}),
\[
W\subset\left(  \bigcup\limits_{\lambda\in\mathbb{R}\setminus\{0\}}span\left(
\left\{  e_{i_{0}}\right\}  \cup\mathcal{A}_{\left(  \lambda,0\right)
}\right)  \right)  \bigcap\left(  \bigcup\limits_{\mu\in\mathbb{R}%
\setminus\{0\}}span\left(  \left\{  e_{i_{1}}\right\}  \cup\mathcal{A}%
_{\left(  0,\mu\right)  }\right)  \right)  \bigcup span\{e_{i_{0}},e_{i_{1}%
}\}=span\{e_{i_{0}},e_{i_{1}}\},
\]
a contradiction. Thus $A$ is not $\left(  2,2^{\aleph_{1}}\right)  $-lineable,
but it is clear that $A$ is $\left(  1,2^{\aleph_{1}}\right)  $-lineable.

\end{example}

Obviously, every known lineability/spaceability result can be investigated in
this more restrictive setting. In this paper we develop a technique to
characterize the $\left(  \alpha,\mathfrak{c}\right)  $-spaceability of
$\ell_{p}$$\setminus%
{\textstyle\bigcup_{0<q<p}}
\ell_{q}$ for all $\alpha<\mathfrak{c}$ that illustrates the technicalities
arisen by this new approach. We also propose open problems in the final section.

\section{Characterization of $\left(  \alpha,\mathfrak{c}\right)
$-spaceability in sequence spaces}

It is well known that $\ell_{p}$\bigskip$\setminus%
{\textstyle\bigcup_{0<q<p}}
\ell_{q}$ is $\mathfrak{c}$-spaceable (see \cite{Pellegrino2}). In this
section we characterize $\left(  \alpha,\mathfrak{c}\right)  $-spaceability in
this setting. For the sake of completeness, we begin by presenting a simple
explicit example of a vector inside $\ell_{p}$\bigskip$\setminus%
{\textstyle\bigcup_{0<q<p}}
\ell_{q}$ that we were not able to find in the literature. Let
\begin{equation}
\mathbb{N=}\bigcup\limits_{j=1}^{\infty}\mathbb{N}_{j}, \label{ant}%
\end{equation}
with $\mathbb{N}_{i}\cap\mathbb{N}_{j}=\emptyset$ whenever $i\neq j$ and
$card\left(  \mathbb{N}_{i}\right)  =\aleph_{0}$ for all $i$. Denote%
\[
\mathbb{N}_{j}=\{j_{1},j_{2},...\}
\]
with $j_{r}<j_{s}$ whenever $r<s$, and define%
\[
x_{j}^{(k)}=\left\{
\begin{array}
[c]{c}%
0\text{, if }j\notin\mathbb{N}_{k}\\
r^{-1/\left(  p-k^{-1}\right)  }\text{, if }j=k_{r}\in\mathbb{N}_{k}.
\end{array}
\right.
\]
Note that $\left(  x_{j}^{(k)}\right)  _{j=1}^{\infty}\in\ell_{p}$%
\bigskip$\setminus\ell_{p-\frac{1}{k}}$. Then, the sequence $\left(
y_{j}\right)  _{j=1}^{\infty}$ defined by
\[
y_{j}=\frac{x_{j}^{(k)}}{2^{k}\left\Vert \left(  x_{l}^{(k)}\right)
_{l=1}^{\infty}\right\Vert _{p}} \text{, }%
\]
where $j\in\mathbb{N}_{k},$ belongs to $\ell_{p}$\bigskip$\setminus%
{\textstyle\bigcup_{0<q<p}}
\ell_{q}.$ In fact,%
\begin{align*}
\sum\limits_{j=1}^{\infty}\left\vert y_{j}\right\vert ^{p}  &  =\sum
\limits_{k=1}^{\infty}\sum\limits_{j\in\mathbb{N}_{k}}^{\infty}\left\vert
y_{j}\right\vert ^{p}\\
&  =\sum\limits_{k=1}^{\infty}\sum\limits_{j\in\mathbb{N}_{k}}^{\infty}\left(
\frac{\left\vert x_{j}^{(k)}\right\vert }{2^{k}\left\Vert \left(  x_{l}%
^{(k)}\right)  _{l=1}^{\infty}\right\Vert _{p}}\right)  ^{p}\\
&  =\sum\limits_{k=1}^{\infty}\frac{1}{2^{kp}\left\Vert \left(  x_{l}%
^{(k)}\right)  _{l=1}^{\infty}\right\Vert _{p}^{p}}\sum\limits_{j\in
\mathbb{N}_{k}}^{\infty}\left\vert x_{j}^{(k)}\right\vert ^{p}\\
&  =\sum\limits_{k=1}^{\infty}\frac{1}{2^{kp}}<\infty,
\end{align*}
and a similar estimate shows that%
\[
\sum\limits_{j=1}^{\infty}\left\vert y_{j}\right\vert ^{q}=\infty
\]
for all $0<q<p.$

\begin{lemma}
\label{lemma_li} Let $p>0$ and $x_{1},\ldots,x_{n}$ be linearly independent
vectors of $\ell_{p}$. There exists $n_{0}\in\mathbb{N}$ such that the first
$n_{0}$ coordinates of $x_{1},\ldots,x_{n}$ form a linearly independent set of
$\mathbb{K}^{n_{0}}$.
\end{lemma}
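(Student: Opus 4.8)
The plan is to argue by contradiction using a rank/dimension count on the ``truncation'' maps. For each $m\in\mathbb N$ let $T_m\colon\ell_p\to\mathbb K^m$ be the linear map sending a sequence to its first $m$ coordinates. If the conclusion fails, then for every $m$ the vectors $T_m(x_1),\dots,T_m(x_n)$ are linearly dependent in $\mathbb K^m$, i.e. $\dim\operatorname{span}\{T_m(x_1),\dots,T_m(x_n)\}\le n-1$ for all $m$.

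Next I would exploit monotonicity: the subspaces $V_m:=\operatorname{span}\{T_m(x_1),\dots,T_m(x_n)\}$ do not literally sit inside one another, but the \emph{dependence relations} do behave monotonically in the right direction. Concretely, consider the set $R_m\subset\mathbb K^n$ of coefficient vectors $(a_1,\dots,a_n)$ with $\sum_j a_j T_m(x_j)=0$; each $R_m$ is a linear subspace of $\mathbb K^n$, and since a relation that kills the first $m+1$ coordinates in particular kills the first $m$, we have $R_{m+1}\subseteq R_m$. By the standing assumption $R_m\neq\{0\}$ for every $m$, so the dimensions $\dim R_m$ form a nonincreasing sequence of positive integers, hence eventually constant, say $R_m=R$ for all $m\ge m_0$ with $R\neq\{0\}$. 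Pick $0\neq(a_1,\dots,a_n)\in R$; then $\sum_j a_j T_m(x_j)=0$ for all $m\ge m_0$, and also for all $m<m_0$ since $R\subseteq R_m$, so $\sum_j a_j x_j$ has all coordinates zero, i.e. $\sum_j a_j x_j=0$ in $\ell_p$. This contradicts the linear independence of $x_1,\dots,x_n$, and taking $n_0=m_0$ finishes the proof.

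The only mildly delicate point — and the one I would state carefully — is the monotonicity $R_{m+1}\subseteq R_m$ together with the fact that a relation holding for \emph{all} $m$ forces the relation to hold coordinatewise, hence in $\ell_p$; both are immediate once the truncation maps $T_m$ are introduced, so there is no real obstacle here. One could phrase the whole argument slightly more slickly by noting that $\bigcap_m R_m$ is exactly the space of linear relations among $x_1,\dots,x_n$ in $\ell_p$, which is $\{0\}$ by hypothesis, while a decreasing chain of subspaces of the finite-dimensional space $\mathbb K^n$ with trivial intersection must be eventually $\{0\}$; that $m_0$ at which $R_{m_0}=\{0\}$ is the desired $n_0$.
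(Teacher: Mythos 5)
Your proof is correct, and it takes a genuinely different route from the paper's. The paper argues by contradiction via compactness: it picks, for each $j$, a relation $\alpha_j=(a_{1j},\dots,a_{nj})$ killing the first $j$ coordinates, normalizes $\Vert\alpha_j\Vert_1=1$, extracts a convergent subsequence $\alpha_{j_k}\to\alpha\neq 0$ by Bolzano--Weierstrass, and then plays continuity of $T$ in $\ell_p$ against coordinatewise convergence to conclude $a_1x_1+\cdots+a_nx_n=0$. Your argument replaces all of this with pure linear algebra: the relation spaces $R_m\subseteq\mathbb{K}^n$ form a decreasing chain whose intersection is exactly the space of relations among $x_1,\dots,x_n$ in $\ell_p$ (trivial by hypothesis), and a decreasing chain of subspaces of a finite-dimensional space stabilizes, so some $R_{n_0}$ is already $\{0\}$. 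The one point you flag as delicate --- that $R_{m+1}\subseteq R_m$ and that a relation lying in every $R_m$ is a genuine relation in $\ell_p$ --- is indeed immediate, since an element of $\ell_p$ with all coordinates zero is zero. Your approach buys elementarity and generality: it uses no topology on $\ell_p$ at all (only that sequences are determined by their coordinates), needs no normalization or subsequence extraction, and would work verbatim over any scalar field; the paper's compactness argument, by contrast, relies on the local compactness of $\mathbb{K}$ and on the fact that $\ell_p$-convergence implies coordinatewise convergence, but is the more standard reflex in this functional-analytic setting.
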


\begin{proof}
Suppose that, for each $j\in\mathbb{N}$, there exist $a_{1j},\ldots a_{nj}%
\in\mathbb{K}$, not all equal to zero, such that
\begin{equation}
a_{1j}x_{1}+\ldots+a_{nj}x_{n}=(0,\ldots,0,\lambda_{j+1},\lambda_{j+2},\ldots)
\label{li}%
\end{equation}
It is plain that we may suppose $\Vert\alpha_{j}\Vert_{1}=1,$ where
$\alpha_{j}=(a_{1j},\ldots a_{nj})\in\mathbb{K}^{n}$, for all $j\in
\mathbb{N}.$ Since $(\alpha_{j})_{j=1}^{\infty}$ is bounded, there is a
convergent subsequence $\alpha_{j_{k}}\rightarrow\alpha=(a_{1},\ldots
,a_{n})\in\mathbb{K}^{n}$. Since $\Vert\alpha_{j}\Vert_{1}=1,$ it follows that
$\alpha\neq0.$ Then
\begin{equation}
a_{1j_{k}}x_{1}+\cdots+a_{nj_{k}}x_{n}\rightarrow a_{1}x_{1}+\cdots+a_{n}%
x_{n},\text{ when }k\rightarrow\infty. \label{li2}%
\end{equation}
On the other hand, if $\pi_{m}\colon\ell_{p}\rightarrow\mathbb{K}$ denotes the
$m$-th canonical projection, then it follows from (\ref{li}) that
\[
\pi_{m}(a_{1j_{k}}x_{1}+\cdots+a_{nj_{k}}x_{n})\rightarrow0,\text{ when
}k\rightarrow\infty,\text{ for every }m\in\mathbb{N}.
\]
Since convergence in $(\ref{li2})$ implies coordinatewise convergence, it
follows that $a_{1}x_{1}+\cdots+a_{n}x_{n}=(0,0,\ldots)$, which is a
contradiction because $x_{1},\ldots,x_{n}$ are linearly independent.
\end{proof}

\begin{theorem}
For all $p>0$ the set $\ell_{p}\setminus%
{\textstyle\bigcup_{0<q<p}}
\ell_{q}$ is $\left(  \alpha,\mathfrak{c}\right)  $-spaceable in $\ell_{p}$
if, and only if, $\alpha<\aleph_{0}$.
\end{theorem}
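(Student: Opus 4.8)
The plan is to establish the two implications separately; throughout write $A=\ell_{p}\setminus\bigcup_{0<q<p}\ell_{q}$. Since $A$ is known to be $\mathfrak{c}$-spaceable, it is $\alpha$-lineable for every $\alpha\le\mathfrak{c}$, so for a fixed cardinal $\alpha<\mathfrak{c}$ the only thing at stake in $(\alpha,\mathfrak{c})$-spaceability is the \emph{extension property}: every $\alpha$-dimensional subspace $W_{\alpha}\subseteq A\cup\{0\}$ should be contained in a \emph{closed} $\mathfrak{c}$-dimensional subspace of $A\cup\{0\}$. For the forward implication I would show that when $\alpha\ge\aleph_{0}$ one can build an $\alpha$-dimensional $W_{\alpha}\subseteq A\cup\{0\}$ whose closure meets $\big(\bigcup_{0<q<p}\ell_{q}\big)\setminus\{0\}$; such a $W_{\alpha}$ admits no closed extension inside $A\cup\{0\}$, so the property fails. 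For the reverse implication I would show that every finite-dimensional $W_{n}\subseteq A\cup\{0\}$ can be enlarged as required. I expect the reverse implication to be the delicate one: a nonzero $w\in W_{n}$ is ``fully bad'', has infinite support, and may decay arbitrarily slowly, so one cannot simply add an arbitrary closed infinite-dimensional subspace of $\ell_{p}$ and hope to remain in $A$.

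For the reverse implication ($\alpha=n<\aleph_{0}$), let $W_{n}=span\{x_{1},\dots,x_{n}\}\subseteq A\cup\{0\}$. Since the coordinates of each $x_{i}$ tend to $0$, for every $m$ only finitely many $j$ satisfy $|x_{i,j}|>4^{-m}$ for some $i\le n$; hence I can pick $s_{1}<s_{2}<\cdots$ with $|x_{i,s_{m}}|\le 4^{-m}$ for all $i\le n$ and all $m$, and put $S=\{s_{1},s_{2},\dots\}$. Then $x_{i}|_{S}\in\ell_{q}(S)$ for \emph{every} $q>0$ --- this is the point of the choice of $S$. Identify $\ell_{p}(S)=\{x\in\ell_{p}:x_{j}=0\text{ for }j\notin S\}$ with $\ell_{p}$ and, using the known $\mathfrak{c}$-spaceability of $A$, choose a closed $\mathfrak{c}$-dimensional subspace $Z\subseteq\ell_{p}(S)$ with $Z\setminus\{0\}\subseteq A$. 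I claim $W_{\mathfrak{c}}:=W_{n}+Z$ works. It is closed (a finite-dimensional subspace plus a closed subspace), it contains $W_{n}$, and $\dim W_{\mathfrak{c}}=\mathfrak{c}$ because $\mathfrak{c}=\dim Z\le\dim W_{\mathfrak{c}}\le n+\mathfrak{c}=\mathfrak{c}$. Finally, take a nonzero $w+z\in W_{\mathfrak{c}}$ with $w\in W_{n}$, $z\in Z$: if $z=0$ then $w\ne0$ lies in $A$ by hypothesis; if $z\ne0$ and we had $w+z\in\ell_{q}$ for some $q<p$, then $z=(w+z)|_{S}-w|_{S}\in\ell_{q}(S)$, since both $(w+z)|_{S}$ and $w|_{S}$ are in $\ell_{q}(S)$, contradicting $z\in A$; hence $w+z\notin\ell_{q}$ for all $q<p$, and as $w+z\in\ell_{p}$ we conclude $w+z\in A$. (If one wants $W_{n}\cap Z=\{0\}$ one may invoke \lemref{lemma_li} to keep $S$ disjoint from a finite set of coordinates witnessing the linear independence of the $x_{i}$; this is not needed for the dimension count.)

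For the forward implication ($\alpha\ge\aleph_{0}$) I would argue as follows. Fix a partition $\mathbb{N}=\{1\}\cup\big(\bigcup_{k\ge1}M_{k}\big)\cup M_{\infty}$ into pairwise disjoint infinite sets. Using the explicit vector exhibited just before \lemref{lemma_li} (transported to $\ell_{p}(M_{k})\cong\ell_{p}$), choose $b_{k}\in\ell_{p}(M_{k})$ with $b_{k}\notin\ell_{q}(M_{k})$ for all $q<p$ and $\|b_{k}\|_{p}=2^{-k}$, and set $w_{k}=e_{1}+b_{k}$. If $\alpha>\aleph_{0}$, use the $\alpha$-lineability of $A$ inside $\ell_{p}(M_{\infty})\cong\ell_{p}$ to pick an $\alpha$-dimensional subspace $W_{1}\subseteq\ell_{p}(M_{\infty})$ with $W_{1}\setminus\{0\}\subseteq A$; if $\alpha=\aleph_{0}$ set $W_{1}=\{0\}$. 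Let $W=span\{w_{k}:k\ge1\}+W_{1}$. Because $e_{1}$, the $b_{k}$'s and $W_{1}$ have pairwise disjoint supports, any nonzero $v=\sum_{k\in F}c_{k}w_{k}+u$ with $u\in W_{1}$ satisfies: if some $c_{j}\ne0$ then $v|_{M_{j}}=c_{j}b_{j}\notin\bigcup_{q<p}\ell_{q}(M_{j})$, so $v\in A$; and if all $c_{j}=0$ then $v=u\in W_{1}\setminus\{0\}\subseteq A$. The same disjointness shows the $w_{k}$'s together with a basis of $W_{1}$ are linearly independent, so $\dim W=\aleph_{0}+\alpha=\alpha$, and $W\setminus\{0\}\subseteq A$. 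But $\|w_{k}-e_{1}\|_{p}=2^{-k}\to0$, so $e_{1}\in\overline{W}$, while $e_{1}\in\bigcap_{q>0}\ell_{q}$ and $e_{1}\ne0$, so $e_{1}\notin A\cup\{0\}$; hence no closed subspace containing $W$ can lie in $A\cup\{0\}$, and $A$ is not $(\alpha,\mathfrak{c})$-spaceable. Combining the two parts yields the stated equivalence.
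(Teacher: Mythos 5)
Your argument is correct, and while your negative direction is essentially the paper's own construction --- perturbations $w_k=e_1+b_k$ of a single vector by disjointly supported ``bad'' tails of $\ell_p$-norm $2^{-k}$, so that $e_1\in\overline{W}$ while $e_1\notin A\cup\{0\}$ --- your positive direction takes a genuinely different and noticeably lighter route. The paper, given $W_n=span\{x_1,\dots,x_n\}$, also isolates an infinite coordinate set on which all the $x_i$ are small enough to lie in every $\ell_q$; but it then builds an explicit operator $T\colon\ell_{\widetilde p}\to\ell_p$ whose range is spanned by the $x_i$ together with disjointly supported copies $\varepsilon_i$ of a tail of $x_1$, and must check by hand, through a coordinatewise limiting argument (this is where \lemref{lemma_li} and the splitting $\mathbb{O}=\bigcup_i\mathbb{O}_i$ enter), that $\overline{T(\ell_{\widetilde p})}\setminus\{0\}$ avoids every $\ell_q$. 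You replace all of that by two soft facts: a closed $\mathfrak{c}$-dimensional $Z\subset\ell_p(S)$ with $Z\setminus\{0\}\subset A$ exists by transporting the known $\mathfrak{c}$-spaceability of $A$ to $\ell_p(S)\cong\ell_p$ (a closed subspace of the closed subspace $\ell_p(S)$ is closed in $\ell_p$), and $W_n+Z$ is automatically closed as the sum of a finite-dimensional subspace and a closed one, valid in any Hausdorff TVS and hence also for $0<p<1$. Your subtraction trick --- if $w+z\in\ell_q$ with $z\neq0$ then $z=(w+z)|_S-w|_S\in\ell_q(S)$, contradicting $z\in A$ --- settles membership in one line and makes \lemref{lemma_li} superfluous. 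Two further gains: you treat general finite $n$ directly rather than ``the case $\alpha=3$, the rest is analogous,'' and by adjoining the $\alpha$-dimensional piece $W_1$ supported on $M_\infty$ your negative direction covers every cardinal $\aleph_0\leq\alpha<\mathfrak{c}$, whereas the paper writes out only $\alpha=\aleph_0$, which settles the ``only if'' in full generality only under CH. What the paper's heavier construction buys is self-containedness: it does not use the spaceability of $A$ as a black box, while your argument leans on the cited result of \cite{Pellegrino2}; since the paper itself invokes that result as known, this is a fair trade.
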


\begin{proof}
If $\alpha=\aleph_{0}$, then the following example shows that $\ell
_{p}\setminus{\textstyle\bigcup_{0<q<p}} \ell_{q}$ is not $\left(  \aleph
_{0},\mathfrak{c}\right)  $-spaceable:

Split
\[
\mathbb{N=}\bigcup\limits_{j=1}^{\infty}\mathbb{N}_{j}%
\]
as in (\ref{ant}). Let
\[
W:=span\{x^{(j)}:j\in\mathbb{N}\}
\]
with

\begin{itemize}
\item $x^{(1)}=\left(  1,\left(  x_{j}^{(1)}\right)  _{j>1}\right)  $ where%
\[
\left\{
\begin{array}
[c]{c}%
x_{j}^{(1)}=0,\text{ if }j\notin\mathbb{N}_{1}\\
\left(  x_{j}^{(1)}\right)  _{j\in\mathbb{N}_{1}(j>1)}\in\ell_{p}\setminus%
{\textstyle\bigcup_{0<q<p}}
\ell_{q}%
\end{array}
\right.
\]
and%
\[
\left\Vert \left(  x_{j}^{(1)}\right)  _{j\in\mathbb{N}_{1}(j>1)}\right\Vert
_{p}=2^{-1}.
\]

\item $x^{(2)}=\left(  1,\left(  x_{j}^{(2)}\right)  _{j>1}\right)  $ where%
\[
\left\{
\begin{array}
[c]{c}%
x_{j}^{(2)}=0,\text{ if }j\notin\mathbb{N}_{2}\\
\left(  x_{j}^{(2)}\right)  _{j\in\mathbb{N}_{2}(j>1)}\in\ell_{p}%
\bigskip\setminus%
{\textstyle\bigcup_{0<q<p}}
\ell_{q}%
\end{array}
\right.
\]
and%
\[
\left\Vert \left(  x_{j}^{(2)}\right)  _{j\in\mathbb{N}_{2}(j>1)}\right\Vert
_{p}=2^{-2},
\]
and so on. It is obvious that $\left(  W\setminus\{0\}\right)  \subset\ell
_{p}\setminus%
{\textstyle\bigcup_{0<q<p}}
\ell_{q}$ and $\dim\left(  W\right)  =$ $\aleph_{0}$ and there is no closed
subspace $W_{1}$ of $\ell_{p}$ such that $W\subset W_{1}$ and $\left(
W_{1}\setminus\{0\}\right)  \subset$ $\ell_{p}$$\setminus%
{\textstyle\bigcup_{0<q<p}}
\ell_{q}$, because
\[
\lim_{k\rightarrow\infty}\left\Vert \left(  x_{j}^{(k)}\right)  _{j\in
\mathbb{N}}-e_{1}\right\Vert _{p}=\lim_{k\rightarrow\infty}\left\Vert \left(
0,x_{j}^{(k)}\right)  _{j\in\mathbb{N}_{k}(j>1)}\right\Vert _{p}%
=\lim_{k\rightarrow\infty}\frac{1}{2^{k}}=0.
\]

\end{itemize}

Now we prove the theorem for $\alpha=3.$ The general case $\alpha\in
\mathbb{N}$ is easily adapted from this case.

Let $x=\left(  x_{j}\right)  _{j=1}^{\infty},$ $y=\left(  y_{j}\right)
_{j=1}^{\infty}$ and $z=(z_{j})_{j=1}^{\infty}$ be linearly independent and
\[
W:=span\{x,y,z\}\subset\left(  \ell_{p}\bigskip\setminus{\bigcup_{0<q<p}}%
\ell_{q}\right)  \cup\{0\}
\]
be a $3$-dimensional subspace of $\ell_{p}.$ Since $\left(  x_{j}\right)
_{j=1}^{\infty},\left(  y_{j}\right)  _{j=1}^{\infty}$ and $(z_{j}%
)_{j=1}^{\infty}$ are linearly independent, Lemma \ref{lemma_li} assures that
there is a natural number $n_{0}$ such that the first $n_{0}$ coordinates of
$\left(  x_{j}\right)  _{j=1}^{\infty},\left(  y_{j}\right)  _{j=1}^{\infty}$
and $(z_{j})_{j=1}^{\infty}$ form a linearly independent set of $\mathbb{K}%
^{n_{0}}$.

It is easy to pick an infinite set $\mathbb{N}_{1}:=\{\alpha_{1}<\alpha
_{2}<\cdots\}$ of positive integers such that%
\[
\left\{
\begin{array}
[c]{c}%
{\displaystyle\sum\limits_{j\notin\mathbb{N}_{1}}}
x_{j}e_{j}\in\ell_{p}\bigskip\setminus%
{\textstyle\bigcup_{0<q<p}}
\ell_{q},\\%
{\displaystyle\sum\limits_{j\notin\mathbb{N}_{1}}}
y_{j}e_{j}\in\ell_{p}\bigskip\setminus%
{\textstyle\bigcup_{0<q<p}}
\ell_{q},\\%
{\displaystyle\sum\limits_{j\notin\mathbb{N}_{1}}}
z_{j}e_{j}\in\ell_{p}\bigskip\setminus%
{\textstyle\bigcup_{0<q<p}}
\ell_{q}%
\end{array}
\right.
\]
and such that%
\begin{equation}
\max\{\left\vert x_{\alpha_{l}}\right\vert ,\left\vert y_{\alpha_{l}%
}\right\vert ,\left\vert z_{\alpha_{l}}\right\vert \}<\frac{1}{2^{l}}
\label{999}%
\end{equation}
for all $l\in\mathbb{N}$. Of course, $\mathbb{N}\setminus\mathbb{N}_{1}$ is
also infinite. Let%
\begin{equation}
\mathbb{O}:=\mathbb{N}_{1}\setminus\{1,\ldots,n_{0}\}, \label{101010}%
\end{equation}
and note that%
\begin{equation}
\left\{
\begin{array}
[c]{c}%
{\displaystyle\sum\limits_{j\notin\mathbb{O}}}
x_{j}e_{j}\in\ell_{p}\bigskip\setminus%
{\textstyle\bigcup_{0<q<p}}
\ell_{q},\\%
{\displaystyle\sum\limits_{j\notin\mathbb{O}}}
y_{j}e_{j}\in\ell_{p}\bigskip\setminus%
{\textstyle\bigcup_{0<q<p}}
\ell_{q},\\%
{\displaystyle\sum\limits_{j\notin\mathbb{O}}}
z_{j}e_{j}\in\ell_{p}\bigskip\setminus%
{\textstyle\bigcup_{0<q<p}}
\ell_{q}.
\end{array}
\right.  \label{numerar}%
\end{equation}

Split%
\[
\mathbb{O=}%
{\displaystyle\bigcup\limits_{i=1}^{\infty}}
\mathbb{O}_{i}%
\]
with $\mathbb{O}_{i}:=\{i_{1}<i_{2}<\cdots\}$ and $\mathbb{O}_{i}%
\cap\mathbb{O}_{j}=\emptyset$ for all $i\neq j.$ It is plain that
$\mathbb{N\setminus O}$ is infinite and let $f:\mathbb{N}\rightarrow
\mathbb{N}$ be injective such that%
\[
\mathbb{N\setminus O}:=\{f(1),f(2),...\}
\]
and define, for all $i$, the vector
\[
\varepsilon_{i}=\sum_{j=1}^{\infty}x_{f(j)}e_{i_{j}}\in\mathbb{K}^{\mathbb{N}%
}.
\]

It is obvious that $\varepsilon_{i}\in\ell_{p}$ for all $i.$ Define
$\widetilde{p}=1$ if $p\geq1$ and $\widetilde{p}=p$ if $0<p<1$. For
$(a_{i})_{i=1}^{\infty}\in\ell_{\widetilde{p}}$,
\[
\sum_{i=1}^{\infty}\Vert a_{i}\varepsilon_{i}\Vert_{p}^{\widetilde{p}}%
=\sum_{i=1}^{\infty}|a_{i}|^{\widetilde{p}}\Vert\varepsilon_{i}\Vert
_{p}^{\widetilde{p}}\leq\Vert x\Vert_{p}^{\widetilde{p}}\sum_{i=1}^{\infty
}\left\vert a_{i}\right\vert ^{\widetilde{p}}=\Vert x\Vert_{p}^{\widetilde{p}%
}\left\Vert (a_{i})_{i=1}^{\infty}\right\Vert _{\widetilde{p}}^{\widetilde{p}%
}<\infty.
\]
Thus $\sum_{i=1}^{\infty}\Vert a_{i}\varepsilon_{i}\Vert_{p}^{\widetilde{p}%
}<\infty$ and thus the series $\sum_{i=1}^{\infty}a_{i}\varepsilon_{i}$
converges in $\ell_{p}$. Hence, the operator%
\[
\left\{
\begin{array}
[c]{c}%
T\colon\ell_{\widetilde{p}}\longrightarrow\ell_{p},\\
~T\left(  \left(  a_{i}\right)  _{i=0}^{\infty}\right)  =a_{0}x+a_{1}%
y+a_{2}z+\sum\limits_{i=3}^{\infty}a_{i}\varepsilon_{i}%
\end{array}
\right.
\]
is well defined. It is easy to see that $T$ is linear and injective. In fact,
if
\[
T\left(  \left(  a_{i}\right)  _{i=0}^{\infty}\right)  =0
\]
then%
\[
a_{0}x+a_{1}y+a_{2}z+\sum\limits_{i=3}^{\infty}a_{i}\varepsilon_{i}=0.
\]
In particular, since the first $n_{0}$ coordinates of each $\varepsilon_{i}$
are all zero, we have%
\[
a_{0}\left(  x_{j}\right)  _{j=1}^{n_{0}}+a_{1}\left(  y_{j}\right)
_{j=1}^{n_{0}}+a_{2}\left(  z_{j}\right)  _{j=1}^{n_{0}}=0
\]
and then
\begin{equation}
a_{0}=a_{1}=a_{2}=0. \label{aq}%
\end{equation}
From (\ref{aq}) and since the supports of the vectors $\varepsilon_{i}$ are
disjoint, we have $a_{i}=0$ for all $i$.

Note that $T\left(  \left(  a_{i}\right)  _{i=0}^{\infty}\right)  \notin
\ell_{q}$ for all $0<q<p$ and $\left(  a_{i}\right)  _{i=0}^{\infty}\neq0$. In
fact, let $i_{0}$ be such that $a_{i_{0}}\neq0.$ If $a_{0}=a_{1}=a_{2}=0$ we
have%
\[
\left\Vert T\left(  \left(  a_{i}\right)  _{i=0}^{\infty}\right)  \right\Vert
_{q}=\left\Vert \sum\limits_{i=3}^{\infty}a_{i}\varepsilon_{i}\right\Vert
_{q}\geq\left\vert a_{i_{0}}\right\vert \left\Vert \varepsilon_{i_{0}%
}\right\Vert _{q}=\infty.
\]
If $a_{i}\neq0$ for some $i=0,1,2,$ since the coordinates of $\varepsilon_{i}$
belonging to $\mathbb{N\setminus O}$ are zero, we have%
\[
\left\Vert T\left(  \left(  a_{i}\right)  _{i=0}^{\infty}\right)  \right\Vert
_{q}=\left\Vert a_{0}x+a_{1}y+a_{2}z+\sum\limits_{i=3}^{\infty}a_{i}%
\varepsilon_{i}\right\Vert _{q}\geq\left\Vert \left(  a_{0}x_{j}+a_{1}%
y_{j}+a_{2}z_{j}\right)  _{j\notin\mathbb{O}}\right\Vert _{q}.
\]
Since%
\[
\infty=\left\Vert \left(  a_{0}x_{j}+a_{1}y_{j}+a_{2}z_{j}\right)
_{j\in\mathbb{N}}\right\Vert _{q}^{q}=\left\Vert \left(  a_{0}x_{j}+a_{1}%
y_{j}+a_{2}z_{j}\right)  _{j\in\mathbb{O}}\right\Vert _{q}^{q}+\left\Vert
\left(  a_{0}x_{j}+a_{1}y_{j}+a_{2}z_{j}\right)  _{j\notin\mathbb{O}%
}\right\Vert _{q}^{q}%
\]
and by $(\ref{999})$ and $(\ref{101010})$ we have%
\[
\left\Vert \left(  a_{0}x_{j}+a_{1}y_{j}+a_{2}z_{j}\right)  _{j\in\mathbb{O}%
}\right\Vert _{q}<\infty,
\]
then%
\begin{equation}
\left\Vert \left(  a_{0}x_{j}+a_{1}y_{j}+a_{2}z_{j}\right)  _{j\notin
\mathbb{O}}\right\Vert _{q}=\infty. \label{111111}%
\end{equation}
Thus $T\left(  \left(  a_{i}\right)  _{i=0}^{\infty}\right)  \notin\ell_{q}$
for all $0<q<p.$ We have thus just proved the $\left(  3,\mathfrak{c}\right)
$-lineability (and of course the $\left(  n,\mathfrak{c}\right)  $-lineability
for $n\in\mathbb{N}$ is analogous). Now let us prove the spaceability.

Of course, $\overline{T\left(  \ell_{\widetilde{p}}\right)  }$ is a closed
infinite-dimensional subspace of $\ell_{p}$. We just have to show that
\[
\overline{T\left(  \ell_{\widetilde{p}}\right)  }\mathbb{\setminus}\left\{
0\right\}  \subset\ell_{p}\bigskip\setminus%
{\textstyle\bigcup_{0<q<p}}
\ell_{q}.
\]

Let $w=\left(  w_{n}\right)  _{n=1}^{\infty}\in\overline{T\left(
\ell_{\widetilde{p}}\right)  },$ $w\neq0$. There are sequences $\left(
a_{i}^{(k)}\right)  _{i=0}^{\infty}\in\ell_{\widetilde{p}}$, $k\in\mathbb{N}$,
such that $w=\lim_{k\rightarrow\infty}T\left(  \left(  a_{i}^{(k)}\right)
_{i=0}^{\infty}\right)  $ in $\ell_{p}.$ Note that, for each $k\in\mathbb{N}$,%
\begin{align*}
T\left(  \left(  a_{i}^{(k)}\right)  _{i=0}^{\infty}\right)   &  =a_{0}%
^{(k)}x+a_{1}^{(k)}y+a_{2}^{(k)}z+\sum\limits_{i=3}^{\infty}a_{i}%
^{(k)}\varepsilon_{i}\\
&  =a_{0}^{(k)}x+a_{1}^{(k)}y+a_{2}^{(k)}z+\sum\limits_{i=3}^{\infty}%
a_{i}^{(k)}\sum_{j=1}^{\infty}x_{f(j)}e_{i_{j}}\\
&  =a_{0}^{(k)}x+a_{1}^{(k)}y+a_{2}^{(k)}z+\sum\limits_{i=3}^{\infty}%
\sum\limits_{j=1}^{\infty}a_{i}^{(k)}x_{f(j)}e_{i_{j}}.
\end{align*}

Since the coordinates of $\varepsilon_{i}$ belonging to $\mathbb{N\setminus
O}$ are zero, then the coordinates of $\sum\limits_{i=3}^{\infty}a_{i}%
^{(k)}\varepsilon_{i}$ belonging to $\mathbb{N\setminus O}$ are also zero.
Thus, the coordinates of $T\left(  \left(  a_{i}^{(k)}\right)  _{i=0}^{\infty
}\right)  $ belonging to $\mathbb{N\setminus O}$ are the respective
coordinates of $a_{0}^{(k)}x+a_{1}^{(k)}y+a_{2}^{(k)}z$ for all $k$. Hence the
limit%
\[
\lim_{k\rightarrow\infty}\left(  a_{0}^{(k)}x_{j}+a_{1}^{(k)}y_{j}+a_{2}%
^{(k)}z_{j}\right)  _{j\in\mathbb{N\setminus O}}%
\]
exists in $\ell_{p}\bigskip$. Since $\{\left(  x_{j}\right)  _{j\in
\mathbb{N\setminus O}},\left(  y_{j}\right)  _{j\in\mathbb{N\setminus O}%
},\left(  z_{j}\right)  _{j\in\mathbb{N\setminus O}}\}$ is linearly
independent (because $\{1,...,n_{0}\}\subset\mathbb{N\setminus O}$), it is
obvious that this limit can be written in an unique form as%
\[
\left(  ax_{j}+by_{j}+cz_{j}\right)  _{j\in\mathbb{N\setminus O}}.
\]
Applying a linear functional that sends $x_{j}$ in $1$ and $y_{j}$ and $z_{j}$
in $0$ we obtain that
\[
\lim_{k\rightarrow\infty}\left(  a_{0}^{(k)}\right)  =a.
\]
Analogously we obtain%
\[
\lim_{k\rightarrow\infty}\left(  a_{1}^{(k)}\right)  =b\text{ and }%
\lim_{k\rightarrow\infty}\left(  a_{2}^{(k)}\right)  =c.
\]

Thus%
\[
w_{j}=\left(  \lim_{k\rightarrow\infty}a_{0}^{(k)}\right)  x_{j}+\left(
\lim_{k\rightarrow\infty}a_{1}^{(k)}\right)  y_{j}+\left(  \lim_{k\rightarrow
\infty}a_{2}^{(k)}\right)  z_{j}%
\]
for all positive integers $j$ in $\mathbb{N\setminus O}.$ To finish the proof
consider the following cases:

\begin{itemize}
\item First case: $\lim_{k\rightarrow\infty}a_{i}^{(k)}\neq0,$ for some
$i=0,1,2.$

This case is simple because%
\[
\left\Vert w\right\Vert _{q}^{q}\geq\sum\limits_{j\in\mathbb{N\setminus O}%
}\left\vert a_{0}x_{j}+a_{1}y_{j}+a_{2}z_{j}\right\vert ^{q}\overset
{(\ref{111111})}{=}\infty
\]
and the proof is done.

\item Second case: $\lim_{k\rightarrow\infty}a_{i}^{(k)}=0$ for all $i=0,1,2.$
\end{itemize}

In this case $w_{j}=0$ for all $j$ in $\mathbb{N\setminus O}.$ Since $w\neq0,$
for all there is $r\in\mathbb{O}$ such that $w_{r}\neq0.$

Since $\mathbb{O}=\bigcup_{j=1}^{\infty}\mathbb{O}_{j}$, there are (unique)
$m,t\in\mathbb{N}$ such that $e_{m_{t}}=e_{r}$. Thus, for each $k\in
\mathbb{N}$, the $r$-th coordinate of $T\left(  \left(  a_{i}^{(k)}\right)
_{i=0}^{\infty}\right)  $ is the number $a_{0}^{(k)}x_{r}+a_{1}^{(k)}%
y_{r}+a_{2}^{(k)}z_{r}+a_{m}^{(k)}x_{f(t)}.$ So
\[
0\neq w_{r}=\lim_{k\rightarrow\infty}\left(  a_{0}^{(k)}x_{r}+a_{1}^{(k)}%
y_{r}+a_{2}^{(k)}z_{r}+a_{m}^{(k)}x_{f(t)}\right)  =\lim_{k\rightarrow\infty
}a_{m}^{(k)}x_{f(t)}=x_{f(t)}\cdot\lim_{k\rightarrow\infty}a_{m}^{(k)}.
\]
It follows that $x_{f(t)}\neq0.$ Hence $\lim_{k\rightarrow\infty}|a_{m}%
^{(k)}|=\frac{\left\vert w_{r}\right\vert }{\left\vert x_{f(t)}\right\vert
}\neq0$. For $j,k\in\mathbb{N}$, the $m_{j}$-th coordinate of $T\left(
\left(  a_{i}^{(k)}\right)  _{i=0}^{\infty}\right)  $ is%
\[
a_{0}^{(k)}x_{m_{j}}+a_{1}^{(k)}y_{m_{j}}+a_{2}^{(k)}z_{m_{j}}+a_{m}%
^{(k)}x_{f(j)}.
\]
Defining $\alpha_{m}=\frac{\left\vert w_{r}\right\vert }{\left\vert
x_{f(t)}\right\vert }$ we have
\[
\lim_{k\rightarrow\infty}\left\vert a_{0}^{(k)}x_{m_{j}}+a_{1}^{(k)}y_{m_{j}%
}+a_{2}^{(k)}z_{m_{j}}+a_{m}^{(k)}x_{f(j)}\right\vert =\lim_{k\rightarrow
\infty}\left\vert a_{m}^{(k)}x_{f(j)}\right\vert =\left\vert x_{f(j)}%
\right\vert \cdot\lim_{k\rightarrow\infty}|a_{m}^{(k)}|=\alpha_{m}\left\vert
x_{f(j)}\right\vert
\]
for every $j\in\mathbb{N}$. On the other hand, coordinatewise convergence
gives us%
\[
\lim_{k\rightarrow\infty}\left\vert a_{0}^{(k)}x_{m_{j}}+a_{1}^{(k)}y_{m_{j}%
}+a_{2}^{(k)}z_{m_{j}}+a_{m}^{(k)}x_{f(j)}\right\vert =\left\vert w_{m_{j}%
}\right\vert ,
\]
so $\left\vert w_{m_{j}}\right\vert =\alpha_{m}\left\vert x_{f(j)}\right\vert
$ for each $j\in\mathbb{N}$. Hence
\[
\left\Vert w\right\Vert _{q}^{q}=\sum\limits_{n=1}^{\infty}\left\vert
w_{n}\right\vert ^{q}\geq\sum\limits_{j=1}^{\infty}\left\vert w_{m_{j}%
}\right\vert ^{q}=\sum\limits_{j=1}^{\infty}\alpha_{m}^{q}\cdot\left\vert
x_{f(j)}\right\vert ^{q}=\alpha_{m}^{q}\cdot\left\Vert \left(  x_{f(j)}%
\right)  _{j=1}^{\infty}\right\Vert _{q}^{q}=\alpha_{m}^{q}\cdot\left\Vert
\left(  x_{j}\right)  _{j\in\mathbb{N\setminus O}}\right\Vert _{q}^{q}%
\overset{\text{(\ref{numerar})}}{=}\infty.
\]
\indent Therefore $w\notin%
{\textstyle\bigcup_{0<q<p}}
\ell_{q}\bigskip$, so $\overline{T\left(  \ell_{\widetilde{p}}\right)
}\setminus\left\{  0\right\}  \subseteq\ell_{p}\setminus%
{\textstyle\bigcup_{0<q<p}}
\ell_{q}.$
\end{proof}

\section{Some open problems}

Of course, any positive result of lineability and/or spaceability is a
potential problem to be investigated in our more general framework (for
instance, the results of \cite{Nacib, Bernal, BCFP_LAA, BCFPS_QJM,
Pellegrino2, Pellegrino1, BFPS_LAA, Teixeira}). Our feeling is that, in
general, new techniques and tools are needed to deal with these new problems.
We finish this paper by illustrating situations in which we were able just to prove $\left(  1,\mathfrak{c}\right)
$-lineability or $\left(  1,\mathfrak{c}\right)
$-spaceability.

\subsection{Non injective linear operators}

Lineability and spaceability are investigated in the framework of (non)
injective and (non) surjective functions/operators in several papers (see, for
instance, \cite{Nacib2, esp, Bernal2, Gamez2, Jimenez2} and the references therein). In this section we prove a related result in the new context
initiated in the present paper; our result provides only $\left(  1,\mathfrak{c}%
\right)  $-lineability; the general case seems to be an interesting open problem, as well as the case of (non) surjective operators.   

\begin{theorem}
Let $A:=\left\{  T\in\mathcal{L}\left(  \ell_{p};\ell_{q}\right)  :\text{
}T\text{ is non injective}\right\}  $. Then, $A$ is $\left(  1,\mathfrak{c}%
\right)  $-lineable.
\end{theorem}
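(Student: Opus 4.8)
The plan is to exhibit a single $\mathfrak{c}$-dimensional subspace of $\mathcal{L}(\ell_p;\ell_q)$ all of whose nonzero elements are non injective, and which moreover contains a prescribed one-dimensional subspace (here $\alpha=1$, so we may start from any nonzero non injective operator $T_0$). First I would observe that it suffices, after possibly rescaling, to take an arbitrary $T_0\in A\setminus\{0\}$ and build a subspace $W$ with $\mathbb{K}T_0\subset W\subset A\cup\{0\}$ and $\dim W=\mathfrak{c}$. The natural device is to pick a fixed nonzero $u\in\ell_p$ in $\ker T_0$ (which exists since $T_0$ is non injective) together with a norm-one functional $\varphi\in\ell_p^{*}=\ell_{p'}$ (or $\ell_\infty$ if $p\le 1$) with $\varphi(u)=0$, and a fixed nonzero vector $v\in\ell_q$. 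Then for each scalar-valued parameter one forms rank-one perturbations of the type $S_h\colon x\mapsto \varphi(h)(x)\,v$ for $h$ ranging over a linearly independent family; every such $S_h$ kills $u$. The issue is that $\{T_0\}\cup\{S_h\}$ spans only a countable-dimensional space unless the index set of the $h$'s already has size $\mathfrak{c}$, which it does not automatically.

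To get full dimension $\mathfrak{c}$ the cleaner route is to fix a nonzero $u\in\ker T_0$, complete $\{u\}$ to see that the subspace $H:=\{x\in\ell_p:\langle x,u^{*}\rangle=0\}$ for a suitable functional $u^{*}$ has codimension one and still is infinite dimensional, and then consider the family of operators $\{T_0\}\cup\{\,x\mapsto \psi(x)\,v : \psi\in H^{*}\subset\ell_p^{*}\,\}$, where $\psi$ runs over functionals vanishing on $u$. Since the dual of an infinite-dimensional Banach space has dimension at least $\mathfrak{c}$, one can choose a linearly independent family $(\psi_\gamma)_{\gamma\in\Gamma}$ of such functionals with $\mathrm{card}(\Gamma)=\mathfrak{c}$; setting $W:=\mathrm{span}\big(\{T_0\}\cup\{x\mapsto\psi_\gamma(x)v:\gamma\in\Gamma\}\big)$ gives $\dim W=\mathfrak{c}$. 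Every nonzero element of $W$ has the form $\lambda T_0 + (x\mapsto\psi(x)v)$ with $\psi$ vanishing on $u$, so it sends $u$ to $\lambda T_0(u)+\psi(u)v=0$; hence every nonzero element of $W$ is non injective, i.e. $W\setminus\{0\}\subset A$. Finally $\mathbb{K}T_0\subset W$ by construction, which establishes $(1,\mathfrak{c})$-lineability once we check $\dim\mathcal{L}(\ell_p;\ell_q)\ge\mathfrak{c}$ so that $\mathfrak{c}\le\lambda$ in the sense required by the definition.

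The step I expect to be the main obstacle — or at least the one needing care — is guaranteeing simultaneously that (a) the perturbing operators genuinely have the common nontrivial kernel vector $u$, (b) the whole family stays linearly independent together with $T_0$ (so a nontrivial combination cannot accidentally vanish or, worse, become injective), and (c) the dimension count yields exactly $\mathfrak{c}$ and not more, which matters because the definition of $(\alpha,\beta)$-lineability demands $\beta\le\lambda=\dim V$; here one should note $\dim\mathcal{L}(\ell_p;\ell_q)=\mathfrak{c}$. Point (a) is automatic from the choice $\psi(u)=0$ and $T_0(u)=0$; point (b) follows because distinct $\psi_\gamma$ give operators with the same range $\mathbb{K}v$ but linearly independent as maps, and $T_0$ is not of that rank-one form (or, if it is, absorb it into the family), so a short linear-algebra argument closes it; point (c) is handled by recalling $\mathrm{card}(\ell_p^{*})=\mathfrak{c}$ and that a Hamel basis of an infinite-dimensional separable Banach space has cardinality $\mathfrak{c}$. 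No deep machinery is needed beyond these observations.
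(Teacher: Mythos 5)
Your proposal is correct, and it rests on the same central observation as the paper --- a non-injective $T_{0}$ annihilates some nonzero $u\in\ell_{p}$ (namely $u=x-y$ where $T_{0}(x)=T_{0}(y)$), so any subspace of operators all vanishing at $u$ lies in $A\cup\{0\}$ --- but it realizes that idea by a genuinely different construction. The paper does not use rank-one perturbations: it splits $\mathbb{N}$ into countably many pairwise disjoint infinite sets $\mathbb{N}_{k}$ (avoiding a coordinate $j_{0}$ where $Tz\neq0$) and forms the coordinate restrictions $T_{k}$, i.e.\ $T$ followed by the projection onto the coordinates in $\mathbb{N}_{k}$; each $T_{k}$ still identifies $x$ and $y$, and $\{T,T_{k}:k\in\mathbb{N}\}$ is linearly independent (tested at $z$ via the $j_{0}$-th coordinate). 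The jump from $\aleph_{0}$ to $\mathfrak{c}$ is then made by the injective linear map $\Psi:\ell_{1}\to\mathcal{L}(\ell_{p};\ell_{q})$, $\Psi\left((a_{k})_{k}\right)=a_{1}T+\sum_{j\geq2}a_{j}T_{j-1}$, well defined because $\Vert T_{k}\Vert\leq\Vert T\Vert$, whose image is a $\mathfrak{c}$-dimensional subspace containing $T$ and contained in $A\cup\{0\}$. Your route instead extracts the cardinal $\mathfrak{c}$ directly from the Hamel dimension of the annihilator of $u$ in $\ell_{p}^{*}$, which makes the argument shorter and purely algebraic (no series to sum), at the price of invoking that the dual of an infinite-dimensional Banach space has Hamel dimension at least $\mathfrak{c}$ and of the small case distinction when $T_{0}$ is itself rank one with range $\mathbb{K}v$ --- which, as you note, is harmless, since $\mathrm{span}\left(\{T_{0}\}\cup\{S_{\psi_{\gamma}}\}\right)$ has dimension $\mathfrak{c}$ whether or not $T_{0}$ is independent of the $S_{\psi_{\gamma}}$. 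The paper's version keeps every generator as a piece of $T$ itself and packages the subspace as the range of a bounded operator from $\ell_{1}$, which is closer in spirit to the spaceability questions left open in that section; neither argument yields more than lineability here, and both correctly reduce $(1,\mathfrak{c})$-lineability to handling an arbitrary $T\in A\setminus\{0\}$.
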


\begin{proof}
In fact, let $T\in A\setminus\{0\}$ and consider $W_{1}$ the subspace
generated by $T$ . By hypothesis, there exist $x,y$ in $\ell_{p}$ with $x\neq
y$ such that
\begin{equation}
T\left(  x\right)  =T\left(  y\right)  . \label{q2}%
\end{equation}

Since $T\neq0$, there is a $z\in\ell_{p}$ such that $T\left(  z\right)  $
$\neq0$. Let $j_{0}$ be such that $\left(  Tz\right)  _{j_{0}}\neq0,$ where
$\left(  Tz\right)  _{j_{0}}$ denotes the $j_{0}$-th coordinate of the
sequence $T(z)$. Let us choose $\left(  \mathbb{N}_{k}\right)  _{k=1}^{\infty
}$ a sequence of pairwise disjoint subsets of $\mathbb{N}$ with $card\left(
\mathbb{N}_{k}\right)  =\aleph_{0} $ for all $k$, and such that $j_{0}\notin%
{\textstyle\bigcup\limits_{k=1}^{\infty}}
\mathbb{N}_{k}$.

Define, for each $k\in\mathbb{N}$, a sequence of linear operators $T_{k}%
:\ell_{p}\longrightarrow\ell_{q}$ of the form
\[
T_{k}\left(  x\right)  =\left\{
\begin{array}
[c]{c}%
\left(  T\left(  x\right)  \right)  _{j},\text{ if }j\in\mathbb{N}_{k}\\
0,\text{ otherwise.}%
\end{array}
\right.
\]
Note that $T\left(  x\right)  =T\left(  y\right)  \Rightarrow T_{k}\left(
x\right)  =T_{k}\left(  y\right)  $ $.$ We conclude that $T_{k}\in A$ for all
$k\in\mathbb{N}$. In addition, the set $\left\{  T,T_{k}:k\in\mathbb{N}%
\right\}  $ is linearly independent. In fact, let $a,a_{1},\ldots,a_{k}$ be
scalars and suppose that%
\[
aT+a_{1}T_{1}+\cdots+a_{k}T_{k}=0.
\]
We have
\[
aT\left(  z\right)  +a_{1}T_{1}\left(  z\right)  +\cdots+a_{k}T_{k}\left(
z\right)  =0.
\]
In particular
\[
a\left(  Tz\right)  _{j_{0}}+a_{1}\left(  T_{1}z\right)  _{j_{0}}+\cdots
+a_{k}\left(  T_{k}z\right)  _{j_{0}}=0.
\]
Since $j_{0}\notin%
{\textstyle\bigcup\limits_{k=1}^{\infty}}
\mathbb{N}_{k}$ , it follows that $\left(  T_{1}z\right)  _{j_{0}}=\cdots$
$=\left(  T_{k}z\right)  _{j_{0}}=0.$ So, $a\left(  Tz\right)  _{j_{0}}=0$ and
thus $a=0$. Consequently
\[
a_{1}T_{1}+\cdots+a_{k}T_{k}=0.
\]
Since $\{T_{1},...,T_{k}\}$ is linearly independent, we conclude that
$a_{1}=\cdots=a_{k}=0$.

Now consider the linear operator
\[
\Psi:\ell_{1}\longrightarrow\mathcal{L}\left(  \ell_{p};\ell_{q}\right)
\]
given by
\[
\Psi\left(  \left(  a_{k}\right)  _{k=1}^{\infty}\right)  =a_{1}T+%
{\textstyle\sum\limits_{j=2}^{\infty}}
a_{j}T_{j-1}.
\]
Note that $\Psi$ is well defined, because
\begin{align*}
\left\Vert \Psi\left(  \left(  a_{k}\right)  _{k=1}^{\infty}\right)
\right\Vert _{\mathcal{L}\left(  \ell_{p};\ell_{q}\right)  }  &  =\left\Vert
a_{1}T+%
{\textstyle\sum\limits_{j=2}^{\infty}}
a_{j}T_{j-1}\right\Vert _{\mathcal{L}\left(  \ell_{p};\ell_{q}\right)  }\\
&  \leq\left\Vert a_{1}T\right\Vert _{\mathcal{L}\left(  \ell_{p};\ell
_{q}\right)  }+%
{\textstyle\sum\limits_{j=2}^{\infty}}
\left\Vert a_{j}T_{j-1}\right\Vert _{\mathcal{L}\left(  \ell_{p};\ell
_{q}\right)  }\\
&  \leq\left\vert a_{1}\right\vert \left\Vert T\right\Vert _{\mathcal{L}%
\left(  \ell_{p};\ell_{q}\right)  }+%
{\textstyle\sum\limits_{j=1}^{\infty}}
\left\vert a_{j}\right\vert .\left\Vert T\right\Vert _{\mathcal{L}\left(
\ell_{p};\ell_{q}\right)  }\\
&  \leq%
{\textstyle\sum\limits_{j=1}^{\infty}}
\left\vert a_{j}\right\vert .\left\Vert T\right\Vert _{\mathcal{L}\left(
\ell_{p};\ell_{q}\right)  }<\infty.
\end{align*}
Moreover, it is not difficult to prove that $\Psi$ is injective. Let
$a=\left(  a_{k}\right)  _{k=1}^{\infty}\in\ell_{1},$ with $a_{k}\neq0$ for
some $k\in\mathbb{N}$ . So, from (\ref{q2}),
\begin{align*}
\Psi\left(  \left(  a_{k}\right)  _{k=1}^{\infty}\right)  \left(  x\right)
&  =a_{1}T\left(  x\right)  +a_{2}T_{1}\left(  x\right)  +a_{3}T_{2}\left(
x\right)  +\cdots\\
&  =a_{1}T\left(  y\right)  +a_{2}T_{1}\left(  y\right)  +a_{3}T_{2}\left(
y\right)  +\cdots\\
&  =\Psi\left(  \left(  a_{k}\right)  _{k=1}^{\infty}\right)  \left(
y\right)  ,
\end{align*}
and $\Psi\left(  \left(  a_{k}\right)  _{k=1}^{\infty}\right)  $ is non
injective$.$ Hence, \ $\Psi\left(  \ell_{1}\setminus\left\{  0\right\}
\right)  \subseteq A.$ Moreover,
\[
T\in W_{1}\subseteq\Psi\left(  \ell_{1}\setminus\left\{  0\right\}  \right)
\subseteq A\cup\left\{  0\right\}  .
\]
Therefore, $A$ is $\left(  1,\mathfrak{c}\right)  $-lineable.
\end{proof}

\subsection{$\left(  \alpha,\mathfrak{c}\right)  $-spaceability in $L_p$ spaces}
Recall that, for $1\leq p<\infty$, $L_{p}\left[  0,1\right]  $ denotes the classical
space of the (class of equivalence of) measurable functions $f:\left[  0,1\right]  \longrightarrow
\mathbb{K}$ equipped with the norm defined by
\[
\left\Vert f\right\Vert _{p}=\left(  \int\nolimits_{0}^{1}\left\vert
f(t)\right\vert ^{p}dt\right)  ^{\frac{1}{p}}.
\]
By mimicking our constructive example of the beginning of the previous
section, we can easily provide a simple construction of a function in $L_{p}\left[
0,1\right]  \diagdown\bigcup \limits_{q>p}L_{q}\left[  0,1\right].$

In \cite{BFPS_LAA} it was proved that $L_{p}\left[  0,1\right]  \diagdown\bigcup \limits_{q>p}L_{q}\left[  0,1\right]  $ is spaceable, but the proof does not assure that $L_{p}\left[  0,1\right]  \diagdown\bigcup \limits_{q>p}L_{q}\left[  0,1\right]  $ is $\left(  \alpha,\mathfrak{c}\right)  $-spaceable for some cardinal $\alpha>0.$ The next result shows that this is true for $\alpha=1.$ The question for a cardinal $1<\alpha<\mathfrak{c}$ remain unanswered. In the next proof, for any $X\subset[0,1]$, the characteristic function of $X$ on $[0,1]$ is denoted by $\chi_{X}$.

\begin{theorem}
\label{mt} $L_{p}\left[  0,1\right]  \diagdown%
{\textstyle\bigcup\limits_{q>p}}
L_{q}\left[  0,1\right]  $ is $\left(  1,\mathfrak{c}\right)  $-spaceable in
$L_{p}\left[  0,1\right]  .$
\end{theorem}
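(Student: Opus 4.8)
The plan is to adapt the strategy of the proof of the sequence-space theorem above: fix an arbitrary one-dimensional subspace $W_1=\mathbb{K}f_0$ with $f_0\in L_p[0,1]\setminus\bigcup_{q>p}L_q[0,1]$, and build around $f_0$ an isometric (or at least bounded, injective) copy of $\ell_p$ sitting inside $L_p[0,1]$ whose nonzero elements all fail to be $q$-integrable for every $q>p$. The natural device is to slice the interval: choose a pairwise disjoint sequence of measurable sets $(E_k)_{k\ge 1}$ in $[0,1]$, each of positive measure, and set $g_k=\chi_{E_k}\cdot c_k$ where the scaling constants $c_k$ are chosen so that $\|g_k\|_p=2^{-k}$ (or any summable normalization). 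One must first arrange that $f_0$ interacts harmlessly with these pieces; as in the discrete case, pass to $f_0$ restricted off $\bigcup_k E_k$ if necessary, observing (this is the $L_p$-analogue of (\ref{numerar})) that one can pick the $E_k$ inside a set of small measure so that $f_0\chi_{[0,1]\setminus\bigcup E_k}$ still lies outside every $L_q$, $q>p$. Then define $T\colon\ell_p\to L_p[0,1]$ by $T((a_i)_{i=0}^\infty)=a_0 f_0+\sum_{i=1}^\infty a_i g_i$; disjointness of supports of the $g_i$ together with the presence of $f_0$ on a region disjoint from all $g_i$ forces $T$ to be well-defined, bounded and injective, exactly as in the computation with $\widetilde p$ above.

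Next I would check the lineability-type content: for any nonzero $(a_i)\in\ell_p$, the function $T((a_i))$ is not in $L_q$ for any $q>p$. If $a_0\ne 0$ this is immediate because on $[0,1]\setminus\bigcup E_k$ we see $a_0 f_0$, which already has infinite $L_q$-norm there; if $a_0=0$ but some $a_{i_0}\ne 0$, then $\|T((a_i))\|_q\ge |a_{i_0}|\,\|g_{i_0}\|_q$, and here the key point is that the scaling constants $c_k$, which were fixed by the $L_p$-normalization $\|g_k\|_p=2^{-k}$, will in general \emph{not} make $\|g_k\|_q$ finite only if the $E_k$ are themselves taken to be ``bad'' sets — but $\chi_{E_k}$ is bounded, hence lies in every $L_q$, so this naive choice fails. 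The fix, which is the genuinely new ingredient compared to the proof above, is to take each $g_k$ itself to be (a scaled restriction of) a function in $L_p\setminus\bigcup_{q>p}L_q$ supported on $E_k$: concretely, transplant the explicit $L_p\setminus\bigcup_{q>p}L_q$ function constructed just before Theorem~\ref{mt} onto each $E_k$ via a measure-isomorphism $[0,1]\to E_k$ and rescale to $\|g_k\|_p=2^{-k}$. Then $\|g_k\|_q=\infty$ for all $q>p$, and the lineability argument goes through verbatim.

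For spaceability I would show $\overline{T(\ell_p)}\setminus\{0\}\subset L_p[0,1]\setminus\bigcup_{q>p}L_q[0,1]$. Take $w=\lim_k T((a_i^{(k)}))$ in $L_p$. On the region $[0,1]\setminus\bigcup_i E_i$ the function $T((a_i^{(k)}))$ equals $a_0^{(k)}f_0$, so $L_p$-convergence there (together with $f_0\ne 0$ on that region) forces $a_0^{(k)}\to a$ for some scalar $a$, and $w=a f_0$ there. If $a\ne 0$ then $w\notin L_q$ for any $q>p$ just from that region. If $a=0$, then $w$ vanishes off $\bigcup E_i$; since $w\ne 0$ there is an index $m$ with $w\chi_{E_m}\ne 0$, and on $E_m$ the function $T((a_i^{(k)}))$ equals $a_0^{(k)}f_0\chi_{E_m}+a_m^{(k)}g_m$. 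As $a_0^{(k)}\to 0$ and $f_0\chi_{E_m}\in L_p$, one extracts $a_m^{(k)}\to \alpha_m$ for some scalar $\alpha_m\ne 0$, whence $w\chi_{E_m}=\alpha_m g_m$, and since $g_m\notin\bigcup_{q>p}L_q$ we get $\|w\|_q\ge \alpha_m\|g_m\|_q=\infty$. Either way $w\notin\bigcup_{q>p}L_q[0,1]$, and since $\overline{T(\ell_p)}$ is a closed infinite-dimensional (hence $\mathfrak{c}$-dimensional, being an infinite-dimensional Banach space) subspace containing $W_1=\mathbb{K}f_0$, this proves $(1,\mathfrak{c})$-spaceability.

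The main obstacle is the one flagged above: unlike the pure sequence-space setting, ``cutting off coordinates'' in $L_p[0,1]$ (i.e.\ restricting to a subset) automatically improves integrability, so the disjoint building blocks $g_k$ cannot be mere indicator functions — they must each individually be non-$L_q$-integrable, which requires transplanting an explicit bad function onto each slice $E_k$ and controlling the rescaling; the rest is a careful but routine translation of the $\ell_p$ argument, with sums over coordinates replaced by integrals over the pieces $E_k$ and coordinatewise convergence replaced by convergence of the $L_p$-restrictions $w\chi_{E_k}$.
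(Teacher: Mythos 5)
Your proposal is correct and follows essentially the same route as the paper's proof: both transplant a member of $L_p[0,1]\setminus\bigcup_{q>p}L_q[0,1]$ onto a sequence of disjoint pieces kept away from a region on which $f_0$ remains non-$q$-integrable, define the same operator $T\left((a_i)\right)=a_0f_0+\sum_{i\geq1}a_ig_i$, and split the analysis of $\overline{T(\ell_{\widetilde p})}$ according to whether $\lim_k a_0^{(k)}$ vanishes. Your extraction of $a_m^{(k)}\to\alpha_m$ via $L_p$-convergence of $a_m^{(k)}g_m$ on $E_m$ is a mild streamlining of the paper's subsequence/almost-everywhere pointwise argument, but the architecture is identical.
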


\begin{proof}
Let $f\in L_{p}\left[  0,1\right]  \diagdown%
{\textstyle\bigcup_{q>p}}
L_{q}\left[  0,1\right]  .$ It is obvious that
\[
\widetilde{f}=f\chi_{\lbrack0,1/2]}\text{ or }\widetilde{\widetilde{f}}=\text{
}f\chi_{\lbrack1/2,1]}%
\]
belongs to $L_{p}\left[  0,1\right]  \bigskip\diagdown%
{\textstyle\bigcup_{q>p}}
L_{q}\left[  0,1\right]  .$ Without loss of generality let us assume that
\[
\widetilde{f}\in L_{p}\left[  0,1\right]  \bigskip\diagdown%
{\textstyle\bigcup_{q>p}}
L_{q}\left[  0,1\right]  .
\]
Split $[1/2,1)$ as an infinite sequence of disjoint intervals $I_{n}%
=[c_{n},d_{n})$. Notice that, for every $n\in\mathbb{N}$ and every $x\in
I_{n}$, there is a unique $t_{x,n}\in\left[  0,1\right)  $ such that
\[
x=(1-t_{x,n})c_{n}+t_{x,n}d_{n}.
\]
Define
\[
f_{n}(x)=\left\{
\begin{array}
[c]{cl}%
\widetilde{f}(t_{x,n}) & \mathrm{if~}x\in I_{n},\\
0 & \mathrm{if~}x\notin I_{n}.
\end{array}
\right.
\]
It is simple to verify that our construction provides
\[
\left\Vert f_{n}\right\Vert _{p}<\left\Vert \widetilde{f}\right\Vert _{p},
\]
for every $n\in\mathbb{N}$ and so $f_{n}\in L_{p}\left[  0,1\right]  $.

Define $\widetilde{p}=1$ if $p\geq1$ and $\widetilde{p}=p$ if $0<p<1$. For
$(a_{i})_{i=1}^{\infty}\in\ell_{\widetilde{p}}$,
\[
\sum_{i=1}^{\infty}\Vert a_{i}f_{i}\Vert_{p}^{\widetilde{p}}=\sum
_{i=1}^{\infty}|a_{i}|^{\widetilde{p}}\Vert f_{i}\Vert_{p}^{\widetilde{p}}%
\leq\Vert\widetilde{f}\Vert_{p}^{\widetilde{p}}\sum_{i=1}^{\infty}\left\vert
a_{i}\right\vert ^{\widetilde{p}}=\Vert\widetilde{f}\Vert_{p}^{\widetilde{p}%
}\left\Vert (a_{i})_{i=1}^{\infty}\right\Vert _{\widetilde{p}}^{\widetilde{p}%
}<\infty.
\]
Thus $\sum_{i=1}^{\infty}\Vert a_{i}f_{i}\Vert_{p}^{\widetilde{p}}<\infty$ and
thus the series $\sum_{i=1}^{\infty}a_{i}f_{i}$ converges in $L_{p}\left[
0,1\right]  $. Hence, the operator
\[
T\colon\ell_{\widetilde{p}}\longrightarrow L_{p}\left[  0,1\right]
~~,~~T\left(  \left(  a_{i}\right)  _{i=0}^{\infty}\right)  =a_{0}%
f+\sum\limits_{i=1}^{\infty}a_{i}f_{i}%
\]
is well defined. It is easy to see that $T$ is linear and injective. In fact,
if
\[
T\left(  \left(  a_{i}\right)  _{i=0}^{\infty}\right)  =0
\]
then%
\[
a_{0}f+\sum\limits_{i=1}^{\infty}a_{i}f_{i}=0
\]
and choosing $x\in\lbrack0,1/2]$ we have%
\[
a_{0}f(x)=a_{0}f(x)+\sum\limits_{i=1}^{\infty}a_{i}f_{i}(x)=0.
\]
Since $f$ is non null on $[0,1/2]$ we conclude that $a_{0}=0$. Since
$\{f_{i}:i\in\mathbb{N}\}$ is linearly independent (they have disjoint
supports) we obtain $a_{i}=0$ for all $i$.

Thus $\overline{T\left(  \ell_{\widetilde{p}}\right)  }$ is a closed
infinite-dimensional subspace of $L_{p}\left[  0,1\right]  $. We just have to
show that
\[
\overline{T\left(  \ell_{\widetilde{p}}\right)  }-\left\{  0\right\}
\subseteq L_{p}\left[  0,1\right]  \diagdown%
{\textstyle\bigcup_{q>p}}
L_{q}\left[  0,1\right]  .
\]
Indeed, let $g\in\overline{T\left(  \ell_{\widetilde{p}}\right)  }%
\setminus\{0\}$. Thus $g\neq0$ a.e., that is, the set $[0,1]-A$ has null
measure, where
\[
A=\{x\in\lbrack0,1]:g(x)\neq0\}.
\]

Let us consider sequences $\left(  a_{i}^{(k)}\right)  _{i=0}^{\infty}\in
\ell_{\widetilde{p}}$ ($k\in\mathbb{N}$) such that $g=\lim_{k\rightarrow
\infty}T\left(  \left(  a_{i}^{(k)}\right)  _{i=0}^{\infty}\right)  $ in
$L_{p}[0,1].$ By the definition of $T$ we have%
\[
T\left(  \left(  a_{i}^{(k)}\right)  _{i=0}^{\infty}\right)  =a_{0}^{\left(
k\right)  }f+\sum\limits_{i=1}^{\infty}a_{i}^{\left(  k\right)  }f_{i}%
\overset{k\rightarrow\infty}{\longrightarrow}g\text{ in }L_{p}\left[
0,1\right]  .
\]
In particular%
\[
T\left(  \left(  a_{i}^{(k)}\right)  _{i=0}^{\infty}\right)  \chi
_{\lbrack0,1/2]}\overset{k\rightarrow\infty}{\longrightarrow}g\chi
_{\lbrack0,1/2]}\text{ in }L_{p}\left[  0,1\right]  .
\]
Since $f_{i}$ is null on the interval $[0,1/2]$ for all $i,$ we have%
\begin{equation}
a_{0}^{\left(  k\right)  }\widetilde{f}=T\left(  \left(  a_{i}^{(k)}\right)
_{i=0}^{\infty}\right)  \chi_{\lbrack0,1/2]}\overset{k\rightarrow\infty
}{\longrightarrow}g\chi_{\lbrack0,1/2]}\text{ in }L_{p}\left[  0,1\right]
.\label{h}%
\end{equation}
On the other hand, note that%
\begin{equation}
a_{0}^{\left(  k\right)  }\widetilde{f}\overset{k\rightarrow\infty
}{\longrightarrow}\alpha\widetilde{f}\text{ in }L_{p}\left[  0,1\right]
,\label{j}%
\end{equation}
where $\alpha=\lim_{k\rightarrow\infty}a_{0}^{\left(  k\right)  }$. By
(\ref{h}) and (\ref{j}) we have%
\[
g\chi_{\lbrack0,1/2]}=\alpha\widetilde{f}\text{ a.e.}%
\]
So, if $\alpha\neq0$%
\[
\left\Vert g\right\Vert _{q}^{q}=\int\nolimits_{0}^{1}\left\vert g\left(
t\right)  \right\vert ^{q}dt\geq\int\nolimits_{0}^{\frac{1}{2}}\left\vert
g\left(  t\right)  \right\vert ^{q}dt=\alpha^{q}\left\Vert f\chi
_{\lbrack0,1/2]}\right\Vert _{q}^{q}=\infty,
\]
proving that $g\notin L_{q}\left[  0,1\right]  $.

If $\alpha=0,$ then $g\chi_{\lbrack0,1/2]}=0$ a.e. Define%
\[
\widetilde{A}=\{x\in\lbrack1/2,1]:g(x)\neq0\}.
\]
Since $A$ has positive measure and $g\chi_{\lbrack0,1/2]}=0$ a.e., then
$\widetilde{A}$ has positive measure. Since
\[
\left\Vert \left(  a_{0}^{(k)}f+\sum_{n=1}^{\infty}a_{n}^{(k)}f_{n}-g\right)
\chi_{\lbrack1/2,1]}\right\Vert _{p}\overset{k\rightarrow\infty}%
{\longrightarrow}0,
\]
there is a subsequence
\[
\left(  \left(  a_{0}^{(k_{j})}f+\sum_{n=1}^{\infty}a_{n}^{(k_{j})}%
f_{n}\right)  \chi_{\lbrack1/2,1]}\right)  _{j=1}^{\infty}%
\]
such that
\[
\left(  a_{0}^{(k_{j})}f(x)+\sum_{n=1}^{\infty}a_{n}^{(k_{j})}f_{n}(x)\right)
\chi_{\lbrack1/2,1]}(x)\overset{j\rightarrow\infty}{\longrightarrow}%
g(x)\chi_{\lbrack1/2,1]}(x)\text{ a.e.}%
\]
Hence the set $[1/2,1]-B$, where $B=\left\{  x\in\lbrack1/2,1]:\text{ the
limit above holds}\right\}  $, has measure zero. Since,
\[
\widetilde{A}=(B\cap\widetilde{A})\cup(([1/2,1]-B)\cap\widetilde{A}),
\]
and $([1/2,1]-B)\cap\widetilde{A}$ has measure zero and $\widetilde{A}$ has
positive measure, it follows that $B\cap\widetilde{A}$ has positive measure.
Let $C=\{x\in\lbrack0,1/2]:f(x)=0\}$. Since $f\in L_{p}\left[  0,1\right]
\diagdown%
{\textstyle\bigcup_{q>p}}
L_{q}\left[  0,1\right]  $ it follows that $C$ has measure zero. By the fact
that each $f_{n}$ is the reproduction of $f$ on the interval $I_{n}$, it
follows that the set $C_{n}=\{x\in I_{n}:f_{n}(x)=0\}$ has measure zero, for
all $n\in\mathbb{N}$. Since
\[
B\cap\widetilde{A}=(B\cap\widetilde{A}\cap C_{n})\cup(B\cap\widetilde{A}%
\cap(I_{n}-C_{n})),
\]
and $B\cap\widetilde{A}\cap C_{n}$ has measure zero, then $B\cap\widetilde
{A}\cap(I_{n}-C_{n})$ has positive measure, for each $n\in\mathbb{N}.$ Fixing
$r\in\mathbb{N}$ and choosing $x_{0}\in B\cap\widetilde{A}\cap(I_{r}-C_{r})$,
with $x_{0}\neq1$, we have that $x_{0}\in I_{r}$, $f_{r}(x_{0})\neq0$,
$g(x_{0})\neq0$ and
\[
a_{0}^{(k_{j})}f(x_{0})+a_{r}^{(k_{j})}f_{r}(x_{0})=a_{0}^{(k_{j})}%
f(x_{0})+\sum_{n=1}^{\infty}a_{n}^{(k_{j})}f_{n}(x_{0})\longrightarrow
g(x_{0})\text{ when }j\rightarrow\infty.
\]
Since $\lim_{k\rightarrow\infty}a_{0}^{\left(  k\right)  }=\alpha=0$ we
obtain
\[
\lim_{j\rightarrow\infty}a_{r}^{(k_{j})}=\frac{g(x_{0})}{f_{r}(x_{0})}%
=\eta\neq0.
\]
Since
\[
f_{r}\chi_{I_{r}}(x)a_{r}^{(k_{j})}\longrightarrow g\chi_{I_{r}}(x)\text{
a.e.},\text{ when }j\rightarrow\infty,
\]
by the unicity of the limit we have
\[
g\chi_{I_{r}}=\eta f_{r}\chi_{I_{r}}\text{ a.e., }%
\]
which implies that $g\chi_{I_{r}}\notin L_{q}[0,1]$ and consequently $g\notin
L_{q}[0,1]$ (regardless of the $q>p$) finishing the proof.
\end{proof}

\end{document}